\theoremstyle{plain}
\newtheorem{thm}{Theorem}[section]
\newtheorem{lem}[thm]{Lemma}
\newtheorem{cor}[thm]{Corollary}
\newtheorem{prop}[thm]{Proposition}
\newtheorem{fact}[thm]{Fact}
\newtheorem{rem}[thm]{Remark}
\newcommand{\Harm}{\mathop{\mathrm{Harm}}\nolimits}
\newcommand{\R}{\mathbb{R}}
\newcommand{\x}{{\bold x}}
\newcommand{\y}{{\bold y}}
\newcounter{mycounter}
\begin{document}

\title{Nonexistence of tight spherical design of harmonic index $4$}
\author{Takayuki Okuda and Wei-Hsuan Yu}
\subjclass[2010]{Primary 52C35; Secondary 14N20, 90C22, 90C05}
%MSC2010
\keywords{spherical design, equiangular lines}
\address{
Department of Mathematics, Graduate School of Science, Hiroshima University
1-3-1 Kagamiyama, Higashi-Hiroshima, 739-8526 Japan}
\email{okudatak@hiroshima-u.ac.jp}
\address{Department of Mathematics, Michigan State University,
619 Red Cedar Road, East Lansing, MI 48824}
\email{u690604@gmail.com}
\thanks{The first author is supported by
Grant-in-Aid for JSPS Fellow No.25-6095 and the second author is supported in part by
NSF grants CCF1217894,  DMS1101697}
\date{}
\maketitle

\begin{abstract}
We give a new upper bound of the cardinality of a set of equiangular lines in $\R^n$
with a fixed angle $\theta$
for each $(n,\theta)$ satisfying certain conditions.
Our techniques are based on semi-definite programming methods for spherical codes  introduced by Bachoc--Vallentin [J.~Amer.~Math.~Soc.~2008].
As a corollary to our bound,
we show the nonexistence of spherical tight designs
of harmonic index $4$ on $S^{n-1}$ with $n \geq 3$.
\end{abstract}

\section{Introduction}

The purpose of this paper is to give a new upper bound of the cardinality of a set of equiangular lines with certain angles (see Theorem \ref{thm:rel}).
As a corollary to our bound, we show the nonexistence of tight designs of harmonic index $4$ on $S^{n-1}$ with $n \geq 3$ (see Theorem \ref{thm:nonex-tight}).

Throughout this paper,
$S^{n-1} := \{ x \in \R^n \mid \| x \| = 1 \}$
denotes the unit sphere in $\R^{n}$.
By Bannai--Okuda--Tagami \cite{ban13},
a finite subset $X$ of $S^{n-1}$ is called
\emph{a spherical design of harmonic index $t$ on $S^{n-1}$}
(or shortly, \emph{a harmonic index $t$-design on $S^{n-1}$})
if $\sum_{\x \in X} f(\x) = 0$
for any harmonic polynomial function $f$ on $\R^{n}$ of degree $t$.

Our concern in this paper is in
tight harmonic index $4$-designs.
A harmonic index $t$-design $X$ is said to be \emph{tight} if $X$
attains the lower bound given by \cite[Theorem 1.2]{ban13}.
In particular,
for $t = 4$,
a harmonic index $4$-design on $S^{n-1}$ is tight
if its cardinality is $(n+1)(n+2)/6$.
For the cases where $n=2$, we can construct tight harmonic index
$4$-designs as two points $\x$ and $\y$ on $S^{1}$ with the
inner-product $\langle \x,\y \rangle_{\R^2} = \pm \sqrt{1/2}$.

The paper \cite[Theorem 4.2]{ban13} showed that
if tight harmonic index $4$-designs on $S^{n-1}$ exist,
then $n$ must be $2$ or $3(2k-1)^2 -4$ for some integers $k \geq 3$.
As a main result of this paper,
we show that the later cases do not occur.
That is, the following theorem holds:
\begin{thm}\label{thm:nonex-tight}
For each $n \geq 3$,
spherical tight design of harmonic index $4$ on $S^{n-1}$ does not exist.
\end{thm}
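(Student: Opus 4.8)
The plan is to derive Theorem~\ref{thm:nonex-tight} by combining the equiangular-lines bound of Theorem~\ref{thm:rel} with the arithmetic restriction of Bannai--Okuda--Tagami. The first step is to turn a tight harmonic index $4$-design into a configuration of equiangular lines. So suppose $X\subset S^{n-1}$ is a tight harmonic index $4$-design with $n\ge 3$. Following the proof of \cite[Theorem~1.2]{ban13}, I would use the addition formula to rewrite the harmonic index $4$-condition as $\sum_{\x,\y\in X}G(\langle\x,\y\rangle_{\R^n})=0$, where $G=G_4^{(n)}$ is the degree-$4$ Gegenbauer polynomial normalized by $G(1)=1$; one computes $G(t)=\bigl((n+2)(n+4)t^{4}-6(n+2)t^{2}+3\bigr)/(n^{2}-1)$, whose minimum on $[-1,1]$ equals $\mu=-6/\bigl((n-1)(n+4)\bigr)$ and is attained exactly at $t=\pm\sqrt{3/(n+4)}$. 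Then $0=|X|+\sum_{\x\ne\y}G(\langle\x,\y\rangle_{\R^n})\ge |X|+|X|(|X|-1)\mu$ forces $|X|\ge (n+1)(n+2)/6$, and tightness makes this an equality, which in turn forces $G(\langle\x,\y\rangle_{\R^n})=\mu$, hence $\langle\x,\y\rangle_{\R^n}=\pm\sqrt{3/(n+4)}$, for all distinct $\x,\y\in X$. Since $0<\sqrt{3/(n+4)}<1$, no two points of $X$ are parallel, so the $N:=(n+1)(n+2)/6$ lines $\R\x$ $(\x\in X)$ form a set of $N$ equiangular lines in $\R^{n}$ with common angle $\theta=\arccos\sqrt{3/(n+4)}$.

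Next I would pin down the parameters using \cite[Theorem~4.2]{ban13}: since $n\ge 3$, necessarily $n=3(2k-1)^{2}-4$ for some integer $k\ge 3$. For such $n$ one has $n+4=3(2k-1)^{2}$, hence $\theta=\arccos\bigl(1/(2k-1)\bigr)$ and $N=(n+1)(n+2)/6=2k(k-1)\bigl(3(2k-1)^{2}-2\bigr)$. The whole argument then reduces to the following: Theorem~\ref{thm:rel} should (i) be applicable to the pair $(n,\theta)=\bigl(3(2k-1)^{2}-4,\ \arccos\tfrac1{2k-1}\bigr)$, and (ii) produce an upper bound on the number of equiangular lines in $\R^{n}$ with angle $\theta$ that is strictly smaller than $N$, for every $k\ge 3$. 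Any such instance contradicts the existence of $X$ above, which would prove the theorem.

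I expect (ii) to be the main obstacle. It calls for verifying that the bound of Theorem~\ref{thm:rel} beats $(n+1)(n+2)/6$ all along the family $n=3(2k-1)^{2}-4$. This ought to hold comfortably, since $N$ is only of order $n^{2}$ (in fact $N\sim n^{2}/6$) whereas for angles of the form $\arccos(1/(2k-1))$ the semidefinite bound is of appreciably smaller order in $n$; so I would try to establish the inequality for all large $k$ by a direct estimate on the bound produced by Theorem~\ref{thm:rel}, and dispose of the finitely many remaining small values of $k$ by evaluating that bound for each of them individually. Confirming the hypotheses in (i) for the whole family, and arranging this case split, are then the only remaining tasks; everything else is routine manipulation of Gegenbauer polynomials and of the two cited results from \cite{ban13}.
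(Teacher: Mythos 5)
Your proposal follows essentially the same route as the paper: reduce a tight harmonic index $4$-design to $(n+1)(n+2)/6$ equiangular lines with common angle $\arccos(1/(2k-1))$ in dimension $n=3(2k-1)^2-4$ via \cite[Proposition 4.2, Theorem 4.2]{ban13}, then contradict this with Theorem~\ref{thm:rel}. The step you flag as the main obstacle closes cleanly and uniformly (no case split needed): with $l=2k-1$ and $n-2=3l^2-6$ both denominators in Theorem~\ref{thm:rel} become $6(l\mp1)$, so the bound is $2+(l^2-2)(l+1)^2/2=2(k-1)(4k^3-k-1)$, which is less than $2k(k-1)(12k^2-12k+1)=(n+1)(n+2)/6$ for every $k\ge2$ — this is exactly the paper's Corollary~\ref{cor:tight4-eq}.
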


A set of lines in $\R^n$ is called \emph{an equiangular line system} if
the angle between each pair of lines is constant.
By definition, an equiangular line system can be considered as
a spherical two-distance set with the inner product set $\{ \pm \cos \theta \}$
for some constant $\theta$.
Such the constant $\theta$ is called \emph{the common angle} of the equiangular line system.
The recent development of this topic can be found in \cite{barg14, grea14}.

By \cite[Proposition 4.2]{ban13},
any tight harmonic index $4$-design on $S^{n-1}$
can be considered as an equiangular line system with the common angle $\arccos \sqrt{3/(n+4)}$.
The proof of Theorem \ref{thm:nonex-tight}
will be reduced to a new relative upper bound (see Theorem \ref{thm:rel})
for the cardinalities of equiangular line systems with a fixed common angle.
Note that in some cases, our relative bound is better
than the Lemmens--Seidel relative bound
(see Section \ref{sec:main} for more details).

The paper is organized as follows:
In Section \ref{sec:main},
as a main theorem of this paper,
we give a new relative bound for the cardinalities of equiangular line systems with a fixed common angle satisfying certain conditions.
Theorem \ref{thm:nonex-tight} is followed as a corollary to our relative bound.
In Section \ref{sec:proof}, our relative bound is proved based on the method by Bachoc--Vallentin \cite{bac08a}.

\section{Main results}\label{sec:main}

In this paper, we denote by $M(n)$ and $M_{\cos \theta}(n)$ the maximum number of equiangular lines in $\R^n$ and that with the fixed common angle $\theta$, respectively.
By definition, \[
M(n) = \sup_{0 \leq \alpha < 1} M_\alpha(n).
\]
The important problems for equiangular lines are to give upper and lower estimates $M(n)$ or $M_\alpha(n)$ for fixed $\alpha$.
One can find a summary of recent progress of this topic in \cite{barg14, grea14}.

Let us fix $0 \leq \alpha < 1$.
Then for a finite subset $X$ of $S^{n-1}$ with $I(X) \subset \{ \pm \alpha \}$,
we can easily find an equiangular line system with the common angle $\arccos \alpha$
and the cardinality $|X|$,
where \[
I(X) := \{ \langle \x,\y \rangle_{\R^n} \mid \x,\y \in X \text{ with } \x \neq \y \}
\]
is the set of inner-product values of distinct vectors in $X \subset S^{n-1} \subset \R^n$.
The converse is also true.
In particular, we have
\begin{align*}
M_{\alpha}(n) = \max \{ |X| \mid X \subset S^{n-1} \text{ with } I(X) \subset \{ \pm \alpha \} \},
\end{align*}
and therefore, our problem can be considered as a problem in special kinds of spherical two-distance sets.

In this paper, we are interested in upper estimates of $M_\alpha(n)$.
According to \cite{lem73},
Gerzon gave the upper bound on $M(n)$ as
$M(n) \leq n(n+1)/2$ and therefore,
we have \[
M_\alpha(n) \leq \frac{n(n+1)}{2} \quad \text{ for any } \alpha.
\]
This upper bound is called the Gerzon absolute bound.
Lemmens and Seidel \cite{lem73} showed that
  \begin{equation*}
    M_\alpha(n) \le \frac {n(1-\alpha^2)}{1-n\alpha^2} \quad \text{ in the cases where } 1-n\alpha^2 > 0.
  \end{equation*}
This inequality is sometimes called the Lemmens--Seidel
 relative bound as opposed to the Gerzon absolute bound.

As a main theorem of this paper,
we give other upper estimates of $M_\alpha(n)$ as follows:

\begin{thm}\label{thm:rel}
Let us take $n \geq 3$ and $\alpha \in (0,1)$ with
\[
2-\frac{6\alpha-3}{\alpha^2} < n < 2 + \frac{6\alpha+3}{\alpha^2}.
\]
Then
\[
M_\alpha(n) \leq 2 + \frac{(n-2)}{\alpha} \max \left\{ \frac{(1-\alpha)^3}{(n-2)\alpha^2 +6\alpha-3}, \frac{(1+\alpha)^3}{-(n-2)\alpha^2 + 6\alpha+3} \right\}.
\]
In particular, for an integer $l \geq 2$, if
\[
3l^2-6l+2 < n < 3l^2 + 6l+2
\]
then
\[
M_{1/l}(n) \leq 2 + (n-2) \max \left\{ \frac{(l-1)^3}{-3l^2 + 6l + (n-2)}, \frac{(l+1)^3}{3l^2 + 6l -(n-2)} \right\}.
\]
\end{thm}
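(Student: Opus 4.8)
\medskip
\noindent\emph{Proof strategy.}
The plan is to apply the three--point semidefinite programming bound of Bachoc and Vallentin \cite{bac08a} to the spherical code underlying an equiangular line system, and to supply by hand an explicit dual--feasible solution of low degree whose objective value is the displayed right--hand side. By the identity $M_\alpha(n)=\max\{\,|X|\mid X\subset S^{n-1},\ I(X)\subset\{\pm\alpha\}\,\}$ recorded above, it suffices to bound $|X|$ for a spherical code $X\subset S^{n-1}$ with inner--product set $D=\{-\alpha,\alpha\}$. For such a code one writes a test function as a sum of a two--point part $f_0+\sum_{k\ge 1}f_k\,G_k^n$ in the Gegenbauer polynomials of $S^{n-1}$ and a three--point part $\sum_k\langle F_k,S_k^n\rangle$ assembled from the Bachoc--Vallentin positive--definite matrix kernels $S_k^n(u,v,t)$ in the three pairwise inner products; the solution is feasible once $f_k\ge 0$ for all $k\ge 1$, each $F_k$ is positive semidefinite, and a nonpositivity condition holds on the realizable triples, and then $|X|$ is at most the objective value of the program, which for the certificate below works out to the stated expression. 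Since $D=-D$, one expects a certificate symmetric under $t\mapsto -t$ to suffice, which reduces the number of unknowns.

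\medskip
The heart of the matter is the choice of certificate, and here I expect the two--point part to be governed by the cubic
\[
F(t)=(t-\alpha)(t+\alpha)(t-c)=(t^2-\alpha^2)(t-c),
\]
whose values at $t=\pm\alpha$ vanish automatically, so that the condition ``$F\le 0$ on $D$'' imposes nothing and the parameter $c$ is free. The two relevant choices are $c=\tfrac{\alpha(3-\alpha)}{1+\alpha}$, for which $F(1)=(1-\alpha)^3$, and $c=-\tfrac{\alpha(3+\alpha)}{1-\alpha}$, for which $F(1)=(1+\alpha)^3$; these account for the two cubic numerators in the bound. Expanding $F$ in the Gegenbauer polynomials of $S^{n-1}$, the leading coefficient is positive, the degree--two coefficient equals $-c(n-1)/n$, and the constant coefficient equals $c(\alpha^2-1/n)$; thus for each of the two choices of $c$ at least one of these has the wrong sign when $\alpha^2>1/n$ (the range where the Lemmens--Seidel bound is vacuous), and the role of the matrices $F_k$ in the three--point part is precisely to absorb this deficit while remaining positive semidefinite. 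Because $|D|=2$, the ``for all realizable $(u,v,t)$'' condition collapses to a short finite list: the triples with all coordinates in $\{-\alpha,\alpha\}$ satisfying $1+2uvt-u^2-v^2-t^2\ge 0$, the triples $(1,\alpha,\alpha)$ and $(1,-\alpha,-\alpha)$ together with their coordinate permutations, and $(1,1,1)$.

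\medskip
Feasibility thus reduces to three tasks: (i) nonnegativity of the remaining Gegenbauer coefficients, a handful of elementary sign checks; (ii) positive semidefiniteness of the matrices $F_k$, which are small --- at most $3\times 3$ blocks --- so that this is the nonnegativity of their leading principal minors; and (iii) one scalar inequality at a ``dominant'' triple, namely $(1,\alpha,\alpha)$ or $(1,-\alpha,-\alpha)$, where the cubic value $F(1)$ must be dominated by a three--point matrix entry. I expect the determinant conditions in (ii) to reduce, after clearing positive factors, to precisely $(n-2)\alpha^2+6\alpha-3>0$ and $-(n-2)\alpha^2+6\alpha+3>0$, which is exactly the hypothesis $2-\tfrac{6\alpha-3}{\alpha^2}<n<2+\tfrac{6\alpha+3}{\alpha^2}$; and the positive--semidefiniteness requirement couples the two constraints coming from the two choices of $c$, so that the smallest objective value the certificate can attain is $2$ plus the larger of $\tfrac{n-2}{\alpha}\cdot\tfrac{(1-\alpha)^3}{(n-2)\alpha^2+6\alpha-3}$ and $\tfrac{n-2}{\alpha}\cdot\tfrac{(1+\alpha)^3}{-(n-2)\alpha^2+6\alpha+3}$, which is the $\max$ in the statement.

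\medskip
The main obstacle is task (ii) together with the prior step of producing the certificate at all: one must reverse--engineer the optimal solution of this small Bachoc--Vallentin semidefinite program as an explicit rational function of $n$ and $\alpha$ --- in particular the entries of the matrices $F_k$ --- and then verify, once and for all, that it stays feasible throughout the stated two--sided range of $(n,\alpha)$. The three--point matrix part carries essentially all of the computational weight, and keeping those matrices positive semidefinite is exactly what forces the hypothesis on $(n,\alpha)$. Finally, the second, integral, statement follows at once from the first: putting $\alpha=1/l$ one has $2-\tfrac{6\alpha-3}{\alpha^2}=3l^2-6l+2$ and $2+\tfrac{6\alpha+3}{\alpha^2}=3l^2+6l+2$, and a direct substitution, using $\tfrac{(1-\alpha)^3}{(n-2)\alpha^2+6\alpha-3}=\tfrac1l\cdot\tfrac{(l-1)^3}{-3l^2+6l+(n-2)}$ and the analogous identity for the other term, rewrites $2+\tfrac{n-2}{\alpha}\max\{\cdots\}$ in the displayed form; note also that $1/l\in(0,1)$ for every integer $l\ge 2$, so the hypothesis of the first part is met.
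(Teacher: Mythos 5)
Your overall framework is the right one --- the bound does come from the Bachoc--Vallentin three-point SDP, the two cubic numerators $(1\mp\alpha)^3$ do arise as values at $t=1$ of degree-$3$ data, and your reduction of the second display to the first by setting $\alpha=1/l$ is correct and complete. But the argument as written has a genuine gap: the certificate is never produced. Everything that actually carries the content of the theorem --- which entries of which matrices $S^n_k$ are used, what the matrices $F_k$ are, and why feasibility forces exactly the denominators $(n-2)\alpha^2+6\alpha-3$ and $-(n-2)\alpha^2+6\alpha+3$ --- is deferred with ``I expect''. In particular the claim that positive semidefiniteness of your (unspecified) $F_k$ ``reduces, after clearing positive factors, to precisely'' the two-sided hypothesis on $n$ and $\alpha$ is an assertion equivalent to the theorem itself, not a derivation of it; nothing in the proposal rules out that the true feasibility region of your ansatz is smaller, or that the optimal objective of your certificate family is larger than the stated bound.

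For comparison, the paper works on the primal side and needs only two constraints of the (Barg--Yu form of the) SDP for two-distance sets: the condition $\det W(x)\ge 0$ on the $2\times 2$ matrix $W(x)$, which gives $-X(X-1)+Y+Z\ge 0$ with $X=(x_1+x_2)/3$, $Y=x_3+x_5$, $Z=x_4+x_6$, and the nonnegativity of the single diagonal entry $(S^n_3)_{1,1}(x;\alpha,-\alpha)$. The whole theorem then rests on the explicit evaluation (Lemma \ref{lem:S3explicite}) of $(S^n_3)_{1,1}$ at $(1,1,1)$, $(\alpha,\alpha,1)$, $(\alpha,\alpha,\alpha)$ and $(\alpha,\alpha,-\alpha)$: these values are where the factors $(1\mp\alpha)^3$ and, crucially, the quadratics $(n-2)\alpha^2+6\alpha-3$ and $-(n-2)\alpha^2+6\alpha+3$ actually come from, and the two-sided hypothesis on $n$ is exactly the condition that both $Y$- and $Z$-coefficients in that inequality be nonpositive so that $Y+Z$ can be bounded below by $X(X-1)$. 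That explicit computation (or its dual counterpart: an $F_3$ supported on the $(1,1)$ entry together with verified Gegenbauer coefficients) is the irreducible core of the argument, and it is precisely the piece your proposal does not supply. Until you exhibit it and check feasibility throughout the stated range, what you have is a plausible plan rather than a proof.
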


Recall that by \cite[Proposition 4.2, Theorem 4.2]{ban13} for $n \geq 3$,
if there exists a tight harmonic index $4$-design $X$ on $S^{n-1}$,
then $n = 3(2k-1)^2-4$ for some $k \geq 3$
and
\begin{align*}
M_{\sqrt{3/(n+4)}}(n) = (n+1)(n+2)/6.
\end{align*}

However, as a corollary to Theorem \ref{thm:rel},
we have the following upper bound of $M_{\sqrt{3/(n+4)}}(n)$
 and obtain Theorem \ref{thm:nonex-tight}.

\begin{cor}\label{cor:tight4-eq}
Let us put
$n_k := 3(2k-1)^2-4$ and $\alpha_k := \sqrt{3/(n_k+4)} = 1/(2k-1)$.
Then for each integer $k \geq 2$,
\begin{align*}
M_{\alpha_k}(n_k)
&\leq 2 (k-1) (4k^3-k-1) (< (n_k+1)(n_k+2)/6).
\end{align*}
\end{cor}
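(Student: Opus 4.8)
The plan is to deduce the corollary directly from the ``in particular'' half of Theorem \ref{thm:rel} by a substitution followed by elementary algebra. Write $l := 2k-1$, so that $l$ is an odd integer with $l \geq 3$ (since $k \geq 2$), $\alpha_k = 1/l$, and $n_k = 3l^2 - 4$. First I would check that the hypothesis of the second part of Theorem \ref{thm:rel} is met, i.e.\ that $3l^2 - 6l + 2 < n_k < 3l^2 + 6l + 2$: since $n_k = 3l^2 - 4$, the left inequality is equivalent to $l > 1$ and the right one to $6l + 6 > 0$, both of which are clear.

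Next, apply Theorem \ref{thm:rel} with this $l$ and $n = n_k$. The key simplification is that $n_k - 2 = 3l^2 - 6 = 3(l^2 - 2)$, which makes the two denominators in the maximum collapse: $-3l^2 + 6l + (n_k - 2) = 6l - 6 = 6(l-1)$ and $3l^2 + 6l - (n_k - 2) = 6l + 6 = 6(l+1)$. Hence the two fractions become $(l-1)^3/(6(l-1)) = (l-1)^2/6$ and $(l+1)^3/(6(l+1)) = (l+1)^2/6$, whose maximum is $(l+1)^2/6$ because $l \geq 1$. Therefore $M_{\alpha_k}(n_k) \leq 2 + 3(l^2-2)\cdot (l+1)^2/6 = 2 + (l^2-2)(l+1)^2/2$.

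Then I would substitute $l = 2k-1$: since $l^2 - 2 = 4k^2 - 4k - 1$ and $(l+1)^2 = 4k^2$, we get $(l^2-2)(l+1)^2/2 = 2k^2(4k^2 - 4k - 1) = 8k^4 - 8k^3 - 2k^2$, while expanding $2(k-1)(4k^3 - k - 1) = 8k^4 - 8k^3 - 2k^2 + 2$ shows the bound equals $2(k-1)(4k^3-k-1)$. For the parenthetical strict inequality, compute $(n_k+1)(n_k+2)/6$: since $n_k = 12k^2 - 12k - 1$ one has $n_k + 1 = 12k(k-1)$, so $(n_k+1)(n_k+2)/6 = 2k(k-1)(12k^2 - 12k + 1)$. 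Dividing the desired inequality by $2(k-1) > 0$ reduces it to $4k^3 - k - 1 < k(12k^2 - 12k + 1)$, i.e.\ $8k^3 - 12k^2 + 2k + 1 > 0$, which holds for every $k \geq 2$ (e.g.\ write it as $8k^2(k - 3/2) + 2k + 1$).

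I do not anticipate a genuine obstacle: once Theorem \ref{thm:rel} is available the corollary is pure bookkeeping. The only points requiring a little care are (i) confirming that $n_k$ lies in the admissible dimension range, and (ii) tracking which term attains the maximum in Theorem \ref{thm:rel} — here it is always the second one, since $l \geq 1$ forces $(l+1)^2 \geq (l-1)^2$.
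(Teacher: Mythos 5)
Your proposal is correct and is exactly the derivation the paper intends: the corollary is stated as a direct consequence of the second (``in particular'') part of Theorem \ref{thm:rel}, obtained by substituting $l = 2k-1$, $n = n_k = 3l^2-4$, verifying the admissible range, and simplifying; all of your algebraic identities (collapse of the denominators to $6(l\mp1)$, the value $2+(l^2-2)(l+1)^2/2 = 2(k-1)(4k^3-k-1)$, and the comparison with $(n_k+1)(n_k+2)/6$) check out.
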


It should be remarked that in the setting of Corollary \ref{cor:tight4-eq},
the Lemmens--Seidel relative bound does not work since \[
1-n_k \alpha_k^2 = -2(4 k^2-4k-1)/(2k-1)^2 < 0
\]
and our bound is better than the Gerzon absolute bound.

The proof of Theorem \ref{thm:rel} is given in Section \ref{sec:proof}
based on Bachoc--Vallentin's SDP method for spherical codes \cite{bac08a}.
The origins of applications of the linear programming method in coding theory can be traced back to the work of Delsarte \cite{del73}.
Applications of semidefinite programming (SDP) method in coding theory and distance geometry gained momentum after the pioneering work of Schrijver \cite{Schrijver05code}
that derived SDP bounds on codes in the Hamming and Johnson spaces.
Schrijver's approach was based on the so-called Terwilliger algebra
of the association scheme.
The similar idea for spherical codes were formulated by Bachoc and Vallentin \cite{bac08a}
regarding for kissing number problems.
Barg and Yu \cite{barg13} modified it to achieve maximum size of spherical two-distance sets in $\mathbb{R}^n$ for most values of $n \leq 93$.
In our proof, we restricted the method to obtain upper bounds for equiangular line sets.

We can see in \cite{GST06upperbounds,Schrijver05code} and \cite{bac08a, bac09opti, bac09sdp, Musin08bounds}
that the SDP method works well
for studying binary codes and spherical codes, respectively.
Especially, for equiangular lines, Barg and Yu \cite{barg14} give the best known upper bounds of $M(n)$ for some $n$ with $n \leq 136$ by the SDP method.
Our bounds in Corollary \ref{cor:tight4-eq} are the same as \cite{barg14} in lower dimensions.
However, in some cases, we need some softwares to complete the SDP method.
It should be emphasized that our theorem offer upper bound of $M_{\alpha_k}(n_k)$ for arbitrary large $n_k$
and the proof can be followed by hand calculations without using any convex optimization software.

\section{Proof of our relative bound}\label{sec:proof}

To prove Theorem \ref{thm:rel},
we apply Bachoc--Vallentin's SDP method for spherical codes in \cite{bac08a} to spherical two-distance sets.
The explicit statement of it was given by Barg--Yu \cite{barg13}.

We use symbols $P^{n}_l(u)$ and $S^n_l(u,v,t)$
in the sense of \cite{bac08a}.
It should be noted that
the definition of $S^{n}_l(u,v,t)$ is different from
that of \cite{bac09opti} and \cite{barg13}
(see also \cite[Remark 3.4]{bac08a} for such the differences).

In order to state it,
we define
\begin{align*}
W(x)&:= \begin{pmatrix}1&0\\0&0\end{pmatrix} +
\begin{pmatrix}0&1\\1&1\end{pmatrix} (x_1+x_2)/3 +
\begin{pmatrix}
0&0\\0&1
\end{pmatrix} (x_3+x_4+x_5+x_6), \\
S^{n}_l(x;\alpha,\beta) &:= S^{n}_{l}(1,1,1)+S^{n}_l(\alpha,\alpha,1)x_1 + S^{n}_l(\beta,\beta,1) x_2 + S^{n}_l(\alpha,\alpha,\alpha) x_3 \\
& \quad \quad + S^{n}_l(\alpha,\alpha,\beta) x_4 + S^{n}_l(\alpha,\beta,\beta) x_5 + S^{n}_l(\beta,\beta,\beta) x_6
\end{align*}
for each $x = (x_1,x_2,x_3,x_4,x_5,x_6) \in \R^6$ and $\alpha, \beta \in [-1,1)$.
We remark that $W(x)$ is a symmetric matrix of size $2$ and
$S^{n}_{l}(x;\alpha,\beta)$ is a symmetric matrix of infinite size
indexed by $\{ (i,j) \mid i,j = 0,1,2,\dots, \}$.

\begin{fact}[Bachoc--Vallentin \cite{bac08a} and Barg--Yu \cite{barg13}]\label{fact:SDP-problem}
Let us fix $\alpha, \beta \in [-1,1)$.
Then
any finite subset $X$ of $S^{n-1}$ with $I(X) \subset \{ \alpha,\beta \}$
satisfies
\[
|X| \leq \max \{ 1 + (x_1 + x_2)/3 \mid x = (x_1,\dots,x_6) \in \Omega^{n}_{\alpha,\beta} \}
\]
where the subset $\Omega^{n}_{\alpha,\beta}$ of $\R^{6}$ is defined by
\[
\Omega_{\alpha,\beta}^{n} := \{\, x = (x_1,\dots,x_6) \in \R^{6} \mid \text{ $x$ satisfies the following four conditions } \,\}.
\]
\begin{enumerate}
\item $x_i \geq 0$ for each $i = 1,\dots,6$.
\item $W(x)$ is positive semi-definite.
\item $3 + P^{n}_l(\alpha) x_1 + P^{n}_l (\beta) x_2 \geq 0$ for each $l=1,2,\dots.$
\item Any finite principal minor of $S^{n}_l(x;\alpha,\beta)$ is positive semi-definite for each $l = 0,1,2,\dots.$
\end{enumerate}
\end{fact}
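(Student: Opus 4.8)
The plan is to read the Fact not as a duality statement but as the assertion that the empirical pair-and-triple distribution of $X$ is itself a feasible point of $\Omega^n_{\alpha,\beta}$ whose objective value equals $|X|$; the claimed inequality is then automatic, since $|X|$ is one of the numbers over which the maximum is taken. So I would exhibit an explicit $x \in \Omega^n_{\alpha,\beta}$ with $1 + (x_1+x_2)/3 = |X|$. Two positivity inputs drive everything. The first is Schoenberg's theorem: with $P^n_l$ normalized so that $P^n_l(1) = 1$, the zonal kernel $(\z,\x) \mapsto P^n_l(\langle \z,\x\rangle)$ is positive definite, so $\sum_{\z,\x \in X} P^n_l(\langle \z,\x\rangle) \geq 0$ for every finite $X \subset S^{n-1}$ and every $l \geq 1$. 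The second is the Bachoc--Vallentin three-point positivity \cite{bac08a}: taking $S^n_l$ in the symmetric convention of \cite{bac08a,barg13}, so that its value depends only on the multiset of the three pairwise inner products, one has, for each $l \geq 0$,
\[
\sum_{(\z,\x,\y) \in X^3} S^n_l\bigl(\langle \z,\x\rangle, \langle \z,\y\rangle, \langle \x,\y\rangle\bigr) \succeq 0.
\]

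I would then fix $N := |X|$ and record the empirical counts of $X$: let $a$ (resp. $b$) be the number of ordered pairs of distinct points with inner product $\alpha$ (resp. $\beta$), and let $m_3, m_4, m_5, m_6$ be the numbers of ordered triples of pairwise-distinct points whose three pairwise inner products form the multisets $\{\alpha,\alpha,\alpha\}$, $\{\alpha,\alpha,\beta\}$, $\{\alpha,\beta,\beta\}$, $\{\beta,\beta,\beta\}$. Set
\[
x := (3a/N,\ 3b/N,\ m_3/N,\ m_4/N,\ m_5/N,\ m_6/N).
\]
Condition (1) is immediate. For condition (3) I would split Schoenberg's sum into its $N$ diagonal terms (each $P^n_l(1) = 1$) and its off-diagonal part $a P^n_l(\alpha) + b P^n_l(\beta)$; multiplying $N + a P^n_l(\alpha) + b P^n_l(\beta) \geq 0$ by $3/N$ yields exactly $3 + x_1 P^n_l(\alpha) + x_2 P^n_l(\beta) \geq 0$.

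Condition (4) is where I would expand the three-point sum by the coincidence pattern of $\z,\x,\y$. The diagonal $\z=\x=\y$ gives $N\, S^n_l(1,1,1)$; the triples with exactly two coinciding points give, after using the symmetry of $S^n_l$ to merge the three possible positions of the repeated point, $3a\, S^n_l(\alpha,\alpha,1) + 3b\, S^n_l(\beta,\beta,1)$; and the distinct triples give $m_3 S^n_l(\alpha,\alpha,\alpha) + m_4 S^n_l(\alpha,\alpha,\beta) + m_5 S^n_l(\alpha,\beta,\beta) + m_6 S^n_l(\beta,\beta,\beta)$. Dividing by $N$ turns the whole sum into precisely $S^n_l(x;\alpha,\beta)$, whose finite principal minors are therefore positive semidefinite. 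The factor $3$ built into $x_1, x_2$ is exactly what records the three placements of the repeated point, and this is also why the same variables serve both constraint (3) and the degenerate part of (4).

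Condition (2) and the objective then come from elementary counting. From $a + b = N(N-1)$ we get $(x_1+x_2)/3 = (a+b)/N = N-1$, so the objective $1 + (x_1+x_2)/3$ equals $N$; from $m_3 + m_4 + m_5 + m_6 = N(N-1)(N-2)$ we get $x_3 + \dots + x_6 = (N-1)(N-2)$. Hence the lower-right entry of $W(x)$ is $(N-1) + (N-1)(N-2) = (N-1)^2$, the off-diagonal entry is $N-1$, and the top-left entry is $1$, so $\det W(x) = (N-1)^2 - (N-1)^2 = 0$ with positive trace, giving $W(x) \succeq 0$. Thus $x \in \Omega^n_{\alpha,\beta}$ has objective $|X|$, which proves the Fact. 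I expect the genuine obstacle to be none of this bookkeeping but the three-point positivity itself: proving $\sum_{(\z,\x,\y)} S^n_l \succeq 0$ requires realizing $S^n_l(\langle\z,\x\rangle,\langle\z,\y\rangle,\langle\x,\y\rangle)$ as a Gram matrix of spherical-harmonic vectors on the equator through the pole $\z$ via the addition formula, which is the substance of \cite{bac08a}. I would cite this rather than reprove it; the only point demanding care on my side is to fix the symmetric convention for $S^n_l$ so that the degenerate triples fold correctly onto $x_1$ and $x_2$.
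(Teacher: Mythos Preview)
Your proposal is correct and is precisely the primal construction underlying the cited semidefinite bound: one exhibits the empirical pair/triple distribution of $X$ as a feasible point of $\Omega^n_{\alpha,\beta}$ with objective value $|X|$. The paper itself does not prove this statement; it records it as a Fact taken from Bachoc--Vallentin \cite{bac08a} and its specialization to two-distance sets in Barg--Yu \cite{barg13}, so there is no ``paper's proof'' to compare against beyond those references. Your reconstruction matches what one finds there.

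Two small points worth flagging. First, the paper explicitly adopts the \cite{bac08a} convention for $S^n_l$, which is symmetric in its first two arguments but not a priori in all three; the fully symmetrized version appears only after averaging over $\mathfrak{S}_3$ (cf.\ \cite[Remark~3.4]{bac08a}). You already note that fixing this convention is the one delicate step, and you are right: the way the linear form $S^n_l(x;\alpha,\beta)$ is written in the paper (with $S^n_l(\alpha,\alpha,1)$ rather than separate $S^n_l(1,\alpha,\alpha)$, $S^n_l(\alpha,1,\alpha)$ terms) presumes exactly this symmetrization, and your factor of $3$ in $x_1,x_2$ is what makes the degenerate-triple count land correctly. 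Second, your verification of condition~(2) via $\det W(x)=0$ and positive trace is clean; it is worth remarking that this $W(x)$ is nothing but $\frac{1}{N}\begin{pmatrix}1\\ N-1\end{pmatrix}\begin{pmatrix}1 & N-1\end{pmatrix}$, so its rank-one positivity is immediate. None of this changes your argument; it just sharpens the bookkeeping.
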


To prove Theorem \ref{thm:rel},
we use the following ``linear version'' of Fact \ref{fact:SDP-problem}:

\begin{cor}\label{cor:triangleLP}
In the same setting of Fact $\ref{fact:SDP-problem}$,
\[
|X| \leq \max \{ 1 + (x_1 + x_2)/3 \mid x = (x_1,\dots,x_6) \in \widetilde{\Omega}^{n}_{\alpha,\beta} \}
\]
where the subset $\widetilde{\Omega}^{n}_{\alpha,\beta}$ of $\R^{6}$ is defined by
\[
\widetilde{\Omega}_{\alpha,\beta}^{n} := \{\, x = (x_1,\dots,x_6) \in \R^{6} \mid \text{ $x$ satisfies the following three conditions } \,\}.
\]
\begin{enumerate}
\item $x_i \geq 0$ for each $i = 1,\dots,6$.
\item $\det W(x) \geq 0$.
\item $(S^{n}_l)_{i,i}(x;\alpha,\beta) \geq 0$ for each $l,i = 0,1,2,\dots$,
where $(S^{n}_l)_{i,i}(x;\alpha,\beta)$ is the $(i,i)$-entry of the matrix $S^{n}_l(x;\alpha,\beta)$.
\end{enumerate}
\end{cor}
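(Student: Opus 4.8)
The plan is to show that $\widetilde{\Omega}^{n}_{\alpha,\beta}$ is a relaxation of $\Omega^{n}_{\alpha,\beta}$, i.e.\ $\Omega^{n}_{\alpha,\beta} \subset \widetilde{\Omega}^{n}_{\alpha,\beta}$, so that the maximum of the same linear functional $1 + (x_1+x_2)/3$ over the larger set $\widetilde{\Omega}^{n}_{\alpha,\beta}$ dominates that over $\Omega^{n}_{\alpha,\beta}$; Fact \ref{fact:SDP-problem} then gives the claimed bound on $|X|$. So it suffices to verify that each of the three defining conditions of $\widetilde{\Omega}^{n}_{\alpha,\beta}$ is implied by the four conditions defining $\Omega^{n}_{\alpha,\beta}$.

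First, condition (1) is literally the same in both sets, so there is nothing to do. Second, for condition (2): if $W(x)$ is positive semi-definite (condition (2) of $\Omega^{n}_{\alpha,\beta}$), then in particular $\det W(x) \geq 0$, since the determinant of a positive semi-definite matrix is the product of its nonnegative eigenvalues. Third, for condition (3) of $\widetilde{\Omega}^{n}_{\alpha,\beta}$: condition (4) of $\Omega^{n}_{\alpha,\beta}$ says every finite principal minor of $S^{n}_l(x;\alpha,\beta)$ is positive semi-definite; taking the $1\times 1$ principal submatrix indexed by $\{i\}$ shows that its sole entry $(S^{n}_l)_{i,i}(x;\alpha,\beta)$ is $\geq 0$ for every $l$ and $i$. (Here we simply drop condition (3) of $\Omega^{n}_{\alpha,\beta}$ entirely; we do not need it for the weaker system.) This establishes $\Omega^{n}_{\alpha,\beta} \subset \widetilde{\Omega}^{n}_{\alpha,\beta}$ and completes the proof.

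There is essentially no obstacle here: the corollary is a routine weakening of the semidefinite program of Fact \ref{fact:SDP-problem} to a system of (finitely relevant) linear inequalities, obtained by replacing ``$A \succeq 0$'' with the two consequences ``$\det A \geq 0$'' (for the $2\times 2$ block $W(x)$) and ``all diagonal entries of $A$ are $\geq 0$'' (for the infinite block $S^{n}_l$). The only mild point worth flagging is that $\det W(x)\geq 0$ together with the later-used structural facts about $W(x)$ will, in the proof of Theorem \ref{thm:rel}, have to be enough to pin down the feasible region; but for the corollary itself this is not needed — the containment of feasible sets is immediate.
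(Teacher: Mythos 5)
Your argument is correct and is exactly the (implicit) justification the paper intends: the three conditions defining $\widetilde{\Omega}^{n}_{\alpha,\beta}$ are consequences of the four conditions defining $\Omega^{n}_{\alpha,\beta}$ (positive semi-definiteness of $W(x)$ gives $\det W(x)\geq 0$, and the $1\times 1$ principal minors of $S^{n}_l(x;\alpha,\beta)$ give the diagonal inequalities), so $\Omega^{n}_{\alpha,\beta}\subset\widetilde{\Omega}^{n}_{\alpha,\beta}$ and the bound of Fact \ref{fact:SDP-problem} only weakens. The paper states the corollary without proof, and your relaxation argument is precisely the routine verification it omits.
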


By Corollary \ref{cor:triangleLP},
the proof of Theorem \ref{thm:rel} can be reduced to show the following proposition:

\begin{prop}\label{prop:SDPrel}
Let $n \geq 3$ and $0 < \alpha <1$.
Then the following holds:
\begin{enumerate}
\item
\begin{align*}
\max \{ 1+(x_1+x_2)/3 \mid x \in \widetilde{\Omega}^{n}_{\alpha,-\alpha} \} \leq 2 + 2 + (n-2)\frac{(1-\alpha)^3}{\alpha((n-2)\alpha^2 +6\alpha-3)}
\end{align*}
if
$(1-\alpha)^3(-(n-2)\alpha^2 + 6\alpha+3) \geq
(1+\alpha)^3((n-2)\alpha^2 +6\alpha-3) \geq 0$.
\item
\begin{align*}
\max \{ 1+(x_1+x_2)/3 \mid x \in \widetilde{\Omega}^{n}_{\alpha,-\alpha} \} \leq 2 + (n-2) \frac{(1+\alpha)^3}{\alpha(-(n-2)\alpha^2 + 6\alpha+3)}
\end{align*}
if
$(1+\alpha)^3((n-2)\alpha^2 +6\alpha-3) \geq (1-\alpha)^3(-(n-2)\alpha^2 + 6\alpha+3) \geq 0$.
\end{enumerate}
\end{prop}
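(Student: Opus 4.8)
The plan is to use the linear program in Corollary \ref{cor:triangleLP} with $\beta = -\alpha$, and to bound its optimal value by exhibiting an explicit dual feasible solution — i.e., a nonnegative combination of the defining constraints that produces an upper bound on the objective $1 + (x_1+x_2)/3$. Concretely, I would keep only a handful of the infinitely many inequalities in condition (3): the diagonal entries $(S^n_l)_{i,i}(x;\alpha,-\alpha) \ge 0$ for the smallest values of $l$ and $i$ (almost certainly $l = 0,1,2$ and $i = 0,1$), together with the single quadratic-in-disguise constraint $\det W(x) \ge 0$ and the sign constraints $x_i \ge 0$. The key point is that $\det W(x) \ge 0$ expands to an inequality of the form $(x_1+x_2)^2/9 \le x_3+x_4+x_5+x_6$ (after using $W(x)_{11} = 1 + (x_1+x_2)/3$ and the off-diagonal structure), so controlling $x_1+x_2$ reduces to bounding the "weight" $x_3+x_4+x_5+x_6$ linearly in terms of $x_1, x_2$ via the $S^n_l$ constraints, and then closing the loop.

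The main computational input is the evaluation of the relevant $S^n_l(u,v,t)$ values at the seven argument triples appearing in the definition of $S^n_l(x;\alpha,\beta)$, specialized to $\beta = -\alpha$. I would use the explicit formulas from Bachoc--Vallentin \cite{bac08a}: $(S^n_l)_{0,0}$ is (a positive multiple of) the Gegenbauer-type kernel, and the low-order entries $(S^n_l)_{i,i}$ are known polynomial expressions in $u,v,t$ and $n$. Plugging in $(u,v,t) = (1,1,1)$, $(\pm\alpha,\pm\alpha,1)$, $(\pm\alpha,\pm\alpha,\pm\alpha)$ gives, for each chosen $(l,i)$, a linear inequality in $(x_1,\dots,x_6)$ whose coefficients are rational functions of $\alpha$ and $n$. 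The numbers $(n-2)\alpha^2 + 6\alpha - 3$ and $-(n-2)\alpha^2 + 6\alpha + 3$ in the statement strongly suggest that the binding constraints are the two coming from $l=2$ evaluated at $t = \pm\alpha$ (the sign of $t$ distinguishing $(1+\alpha)^3$ from $(1-\alpha)^3$ in the numerators after simplification), while the $l=1$ constraints and positivity of $x_3,\dots,x_6$ handle the slack. The two cases (i) and (ii) of the proposition correspond exactly to which of these two $l=2$ constraints is the more restrictive one, i.e., which of the two terms in the $\max$ of Theorem \ref{thm:rel} dominates; the hypothesis $(1\mp\alpha)^3(\pm(n-2)\alpha^2+6\alpha+3)$-comparison is precisely the condition selecting the active constraint, and the positivity hypotheses guarantee the relevant denominators are positive so the dual multipliers are nonnegative.

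The steps, in order: (1) write out $\det W(x) \ge 0$ explicitly and record the inequality bounding $(x_1+x_2)^2$ by $9(x_3+x_4+x_5+x_6)$; (2) compute the needed $(S^n_l)_{i,i}$ values at $\beta=-\alpha$ for $l \in \{1,2\}$ and $i \in \{0,1\}$, obtaining explicit linear constraints $A_l(x) \ge 0$; (3) find nonnegative multipliers $c_1,\dots$ (depending on $n,\alpha$) such that $\sum c_l A_l(x) + (\text{nonneg. combination of } x_i\ge 0)$ yields $x_3+x_4+x_5+x_6 \le (\text{linear in } x_1,x_2)$; (4) combine with the determinant bound and optimize over $x_1+x_2$ to get the stated bound; (5) verify in each of the two cases that the chosen multipliers are nonnegative precisely under the stated hypotheses, using $0<\alpha<1$ to control signs of $(1\pm\alpha)$. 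I expect step (3) — guessing the right linear combination of $S^n_l$-constraints, equivalently the right dual certificate — to be the main obstacle: the $S^n_l(x;\alpha,-\alpha)$ matrices have many entries and one must isolate the few whose diagonal inequalities, taken together, force the weight bound with the exact constants $(n-2)\alpha^2+6\alpha-3$ and its companion. Once the certificate is identified, steps (1)--(2) and (4)--(5) are mechanical polynomial manipulations in $\alpha$ and $n$.
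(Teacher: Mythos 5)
Your high-level strategy is the right one and matches the paper's: combine $\det W(x)\ge 0$ (which, with $X=(x_1+x_2)/3$, $Y=x_3+x_5$, $Z=x_4+x_6$, reads $-X(X-1)+Y+Z\ge 0$ --- note $W_{11}=1$, not $1+(x_1+x_2)/3$; the linear term sits in $W_{22}$) with a \emph{single} diagonal entry of one of the matrices $S^n_l$ to get an upper bound on $Y+Z$ that is linear in $X$, and close the loop to obtain $X\le 1 + A/B$. But the certificate you propose is the wrong one, and this is precisely the step you flag as the main obstacle. The paper uses only the constraint $(S^{n}_3)_{1,1}(x;\alpha,-\alpha)\ge 0$, i.e.\ $l=3$ and the $(1,1)$ entry, whose explicit evaluation (Lemma \ref{lem:S3explicite}) produces exactly the coefficients $(1\mp\alpha)^3\bigl(\pm(n-2)\alpha^2+6\alpha+3\bigr)$ on $Y$ and $Z$ and the coefficient $(n-2)(1-\alpha^2)^3/\alpha$ on $X$. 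Your guess that the binding constraints come from $l\in\{0,1,2\}$, $i\in\{0,1\}$ misses this: the paper's closing remark states explicitly that replacing the $H^{n-1}_{3,4}$ component (the source of $(S^n_3)_{1,1}$) by the other harmonic components of degree $4$ fails to give the bound for small $k$, so the constraints you would compute in your step (2) do not suffice to establish the proposition in the range needed for Theorem \ref{thm:nonex-tight}.

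A second, smaller discrepancy: the two cases of the proposition do not correspond to two different $S$-constraints being active. Both come from the same single inequality
\begin{align*}
(n-2)\tfrac{(1-\alpha^2)^3}{\alpha}X-(1-\alpha)^3\bigl(-(n-2)\alpha^2+6\alpha+3\bigr)Y-(1+\alpha)^3\bigl((n-2)\alpha^2+6\alpha-3\bigr)Z\ \ge\ 0 ;
\end{align*}
the hypothesis in each case merely decides which of the two nonnegative coefficients of $Y$ and $Z$ is larger, so that (using $Y,Z\ge 0$) one may replace both by the smaller one and bound $Y+Z$ as a whole before substituting $Y+Z\ge X(X-1)$. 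Once you replace your candidate constraints by $(S^n_3)_{1,1}\ge 0$ and compute its entries (the content of Lemma \ref{lem:S3explicite}), the rest of your outline --- steps (1), (4), (5) --- goes through essentially verbatim.
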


For the proof of Proposition \ref{prop:SDPrel}, we need the next
explicit formula of $(S^{n}_3)_{1,1}$ which are obtained by direct
computations:

\begin{lem}\label{lem:S3explicite}
For each $-1 < \alpha < 1$,
\begin{align*}
(S^{n}_3)_{1,1}(1,1,1) &= 0 \\
(S^{n}_3)_{1,1}(\alpha,\alpha,1) &= \frac{n(n+2)(n+4)(n+6)}{3(n-1)(n+1)(n+3)}
\alpha^2 (1-\alpha^2)^3 \\
(S^{n}_3)_{1,1}(\alpha,\alpha,\alpha) &=
-\frac{n(n+2)(n+4)(n+6)}{(n-2)(n-1)(n+1)(n+3)}
(\alpha-1)^3 \alpha^3 ((n-2)\alpha^2-6\alpha-3) \\
(S^{n}_3)_{1,1}(\alpha,\alpha,-\alpha) &=
-\frac{n(n+2)(n+4)(n+6)}{(n-2)(n-1)(n+1)(n+3)}
\alpha^3 (\alpha+1)^3 ((n-2)\alpha^2 +6\alpha-3).
\end{align*}
\end{lem}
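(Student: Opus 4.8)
The plan is to unwind the definition of $S^{n}_l(u,v,t)$ from \cite{bac08a} and to evaluate its $(1,1)$-entry, for $l=3$, at the four triples $(1,1,1)$, $(\alpha,\alpha,1)$, $(\alpha,\alpha,\alpha)$, $(\alpha,\alpha,-\alpha)$ by a direct computation. Recall that, in the convention of \cite{bac08a} (which, as the text above notes, differs from that of \cite{bac09opti, barg13}), each entry of $S^{n}_l$ is, up to a normalizing constant depending only on $n,i,j,l$, the symmetrization over the permutations of $(u,v,t)$ of
\[
u^i v^j\,\big((1-u^2)(1-v^2)\big)^{l/2}\;P^{n-1}_l\!\left(\frac{t-uv}{\sqrt{(1-u^2)(1-v^2)}}\right),
\]
where $P^{n-1}_l$ is the Gegenbauer polynomial attached to $S^{n-2}$, normalized by $P^{n-1}_l(1)=1$. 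Since $P^{n-1}_l$ has the same parity as $l$, the apparent square-root singularities cancel and every entry is a genuine polynomial in $(u,v,t)$; in particular it may be evaluated at the degenerate triples $(1,1,1)$ and $(\alpha,\alpha,1)$ only after first passing to this polynomial form, rather than by a naive $0/0$ substitution.

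Concretely, I would begin from $P^{n-1}_3(s)=\dfrac{(n+1)s^3-3s}{n-2}$, which converts the factor $\big((1-u^2)(1-v^2)\big)^{3/2}\,P^{n-1}_3\big(\tfrac{t-uv}{\sqrt{(1-u^2)(1-v^2)}}\big)$ into the polynomial
\[
\frac{(n+1)(t-uv)^3-3(t-uv)(1-u^2)(1-v^2)}{n-2}.
\]
Multiplying by $u^iv^j=uv$ for $i=j=1$, summing over the permutations of $(u,v,t)$, and carrying along the normalizing constant of \cite{bac08a} produces a closed polynomial formula for $(S^{n}_3)_{1,1}(u,v,t)$; it is convenient to leave the overall scale undetermined and fix it afterwards by the $(\alpha,\alpha,1)$ evaluation.

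It then remains to substitute the four triples. At $(\alpha,\alpha,1)$ every permuted summand in which a $1$ falls into one of the first two slots carries the factor $t-uv=\alpha-\alpha=0$, so only one type of term survives; its numerator collapses via $(n+1)(1-\alpha^2)^3-3(1-\alpha^2)^3=(n-2)(1-\alpha^2)^3$, the $n-2$ cancels, and one reads off $(S^{n}_3)_{1,1}(\alpha,\alpha,1)=\frac{n(n+2)(n+4)(n+6)}{3(n-1)(n+1)(n+3)}\alpha^2(1-\alpha^2)^3$, which pins the constant. At $(1,1,1)$ every permuted summand has $t-uv=0$, so $(S^{n}_3)_{1,1}(1,1,1)=0$. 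At $(\alpha,\alpha,\alpha)$ all permuted summands coincide; using $t-uv=\alpha(1-\alpha)$ and $(1-u^2)(1-v^2)=(1-\alpha)^2(1+\alpha)^2$ one extracts the factor $\alpha(1-\alpha)^3\big((n+1)\alpha^2-3(1+\alpha)^2\big)=\alpha(1-\alpha)^3\big((n-2)\alpha^2-6\alpha-3\big)$ and rewrites $(1-\alpha)^3=-(\alpha-1)^3$. At $(\alpha,\alpha,-\alpha)$ the permuted summands again all evaluate to the same value once the signs carried by the monomials $u^iv^j$ at the permuted triples are taken into account; here $t-uv=\mp\alpha(1+\alpha)$ and the relevant factor becomes $\alpha(1+\alpha)^3\big((n+1)\alpha^2-3(1-\alpha)^2\big)=\alpha(1+\alpha)^3\big((n-2)\alpha^2+6\alpha-3\big)$. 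Collecting the constant fixed above gives the four displayed identities.

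The only real obstacle is bookkeeping: keeping the parity reduction straight so that the evaluations at $(1,1,1)$ and $(\alpha,\alpha,1)$ are legitimate rather than $0/0$ substitutions, tracking the signs in the symmetrized sum at $(\alpha,\alpha,-\alpha)$, and carrying out the elementary but mildly bulky factorizations that turn $(n+1)\alpha^2-3(1\mp\alpha)^2$ into $(n-2)\alpha^2\mp6\alpha-3$. Nothing here is conceptually hard, and — in keeping with the surrounding text — the whole lemma is a direct calculation that can be done by hand.
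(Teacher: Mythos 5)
Your approach --- unwinding Bachoc--Vallentin's definition of $S^{n}_l$, using the odd parity of $P^{n-1}_3(s)=\frac{(n+1)s^3-3s}{n-2}$ to pass to the genuine polynomial $\frac{(n+1)(t-uv)^3-3(t-uv)(1-u^2)(1-v^2)}{n-2}$, and then substituting the four triples into the symmetrized sum --- is exactly the ``direct computation'' the paper invokes (it prints no proof of this lemma at all), and the substantive algebra checks out: the $2/6$ surviving fraction of the symmetrization at $(\alpha,\alpha,1)$ accounts for the $1/3$ in the second identity, the collapse $(n+1)-3=n-2$ cancels the denominator of $P^{n-1}_3$ there, and the factorizations $(n+1)\alpha^2-3(1+\alpha)^2=(n-2)\alpha^2-6\alpha-3$ and $(n+1)\alpha^2-3(1-\alpha)^2=(n-2)\alpha^2+6\alpha-3$ reproduce the third and fourth lines, with the sign of $u^iv^j$ at the permuted triples handled correctly in the $(\alpha,\alpha,-\alpha)$ case.

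The one genuine defect is the treatment of the overall normalization: you ``leave the scale undetermined and fix it afterwards by the $(\alpha,\alpha,1)$ evaluation,'' but that constant is part of the definition of $S^{n}_l$ in \cite{bac08a}, so pinning it down by matching the asserted value of $(S^{n}_3)_{1,1}(\alpha,\alpha,1)$ is circular. As written you establish the four identities only up to one common positive multiplicative constant, i.e.\ you prove their ratios, not the stated equalities. To close the gap you must either transcribe the factor $\frac{n(n+2)(n+4)(n+6)}{(n-1)(n+1)(n+3)}$ from the actual definition in \cite{bac08a} and carry it through the computation, or remark explicitly that Proposition \ref{prop:SDPrel} only uses the inequality $(S^{n}_3)_{1,1}(x;\alpha,-\alpha)\geq 0$ after dividing by this common positive factor, so that only the ratios and the positivity of the constant are ever needed.
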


\begin{proof}[Proof of Proposition $\ref{prop:SDPrel}$]
Fix $\alpha$ with $0 < \alpha < 1$ and
take any $x \in \widetilde{\Omega}^{n}_{\alpha,-\alpha}$.
For simplicity we put $X = (x_1+x_2)/3$, $Y = x_3+x_5$ and $Z = x_4+x_6$.
By computing $\det W(x)$,
we have
\begin{align}
-X(X-1)+Y+Z \geq 0. \label{eq:Wrel}
\end{align}
Furthermore,
we have $(S^{n}_3)_{1,1}(x;\alpha,-\alpha) \geq 0$,
and hence, by Lemma \ref{lem:S3explicite},
\begin{multline*}
(n-2) \frac{(1-\alpha^2)^3}{\alpha} X
- (1-\alpha)^3 (-(n-2)\alpha^2 + 6\alpha+3) Y \\
- (1+\alpha)^3 ((n-2)\alpha^2 +6\alpha-3) Z \geq 0
\end{multline*}
Therefore,
in the cases where
\[
(1-\alpha)^3(-(n-2)\alpha^2 + 6\alpha+3) \geq
(1+\alpha)^3((n-2)\alpha^2 +6\alpha-3) \geq 0,
\]
we obtain
\begin{align*}
(n-2) \frac{(1-\alpha^2)^3}{\alpha} X - (1+\alpha)^3 ((n-2)\alpha^2 +6\alpha-3) (Y+Z) \geq 0.
\end{align*}
By combining with \eqref{eq:Wrel},
\begin{align*}
(n-2) \frac{(1-\alpha^2)^3}{\alpha} X - (1+\alpha)^3 ((n-2)\alpha^2 +6\alpha-3) X(X-1) \geq 0
\end{align*}
Thus we have
\[
2 + (n-2) \frac{(1-\alpha)^3}{\alpha((n-2)\alpha^2 +6\alpha-3)} \geq X+1 = 1 + (x_1+x_2)/3.
\]
By the similar arguments,
in the cases where
\[
(1+\alpha)^3((n-2)\alpha^2 +6\alpha-3) \geq
(1-\alpha)^3(-(n-2)\alpha^2 + 6\alpha+3) \geq 0,
\]
we have
\[
2 + (n-2) \frac{(1+\alpha)^3}{\alpha(-(n-2)\alpha^2 +6\alpha+3)} \geq X+1 = 1 + (x_1+x_2)/3.
\]
\end{proof}

\begin{rem}
Harmonic index $4$-designs are defined by using
the functional space $\Harm_{4}(S^{n-1})$.
Therefore, it seems to be natural to consider $\Harm_4(S^{n-1})$
in Bachoc--Vallentin's SDP method.
In our proof, the functional space
\[
H^{n-1}_{3,4} \subset \bigoplus_{m=0}^{4} H^{n-1}_{m,4} = \Harm_{4}(S^{n-1})
\]
$($see \cite{bac08a} for the notation of $H^{n-1}_{m,l}$$)$
plays an important role to show the nonexistence of tight designs of harmonic index $4$
since $(S^n_3)_{1,1}$ comes from $H^{n-1}_{3,4}$.
We checked that if we consider
$H^{n-1}_{0,4} \oplus H^{n-1}_{1,4} \oplus H^{n-1}_{2,4} \oplus
H^{n-1}_{4,4}$ instead of $H^{n-1}_{3,4}$,
our upper bound can not be obtained for small $k$.
However, we can not find any conceptional reason of the importance of $H^{n-1}_{3,4}$.
\end{rem}

\section*{Acknowledgements.}
The authors would like to give heartfelt thanks to Eiichi Bannai, Alexander Barg and Makoto Tagami whose suggestions and comments were of inestimable value for this paper.
The authors also would like to thanks
Akihiro Munemasa, Hajime Tanaka and Ferenc Sz{\"o}ll{\H o}si
for their valuable comments.

\providecommand{\bysame}{\leavevmode\hbox
to3em{\hrulefill}\thinspace} \providecommand{\href}[2]{#2}
\bibliographystyle{amsalpha}

\end{document}